\newtheorem{thm}{Theorem}[section]
\newtheorem{lem}[thm]{Lemma}
\newtheorem{prop}[thm]{Proposition}
\theoremstyle{definition}
\newtheorem{defn}[thm]{Definition}
\theoremstyle{remark}
\newtheorem{rem}[thm]{Remark}
\begin{document}

\title[Unbounded Drift Harnack]{Harnack inequality for degenerate and singular elliptic equations with unbounded drift}
\author{Connor Mooney}
\address{Department of Mathematics, Columbia University, New York, NY 10027}
\email{\tt  cmooney@math.columbia.edu}

\begin{abstract}
 We prove a Harnack inequality for functions which, at points of large gradient, are solutions of elliptic equations with unbounded drift.
\end{abstract}
\maketitle
\section{Introduction}
In this paper we consider operators of the form
$$Lu = a^{ij}(x)u_{ij} + b^i(x)u_i$$
on $B_1 \subset \mathbb{R}^n$, $n \geq 2$, where
$$\lambda I \leq a^{ij}(x) \leq \Lambda I$$
are uniformly elliptic, bounded measurable coefficients,
and the drift $b = (b^1,...,b^n)$ is in $L^n(B_1)$ with
$$\|b\|_{L^n(B_1)} = S.$$
We study functions which are solutions to elliptic equations, but only at points where the gradient is large:
\begin{defn}
 Assume $f \in L^n(B_1)$, and $L$ is as above. We say $u \in W^{2,n}(B_1)$ solves
 $$L_{\gamma}u = f$$
 for some $\gamma \geq 0$ if $Lu = f$, but only where $|\nabla u| > \gamma$ (in the Lebesgue point sense).
\end{defn}

Imbert and Silvestre recently studied the case when $|b| \in L^{\infty}$ in \cite{IS}.
The authors prove that such functions satisfy a Harnack inequality and are H\"{o}lder continuous. 
The idea is that the function is already
regular where the gradient is small, and where the gradient is large it solves an equation. The difficulty is that we don't know apriori where the gradient is large.
The key step is an ABP-type estimate which says that if a positive solution is small at some point, then it is small in a set of 
positive measure.
In \cite{IS} the authors obtain this estimate by sliding cusps from below the graph of $u$ until they touch, which ensures that the equation holds at contact points, and 
estimating the measure of these contact points.

Our first main contribution in this paper is a new proof of the measure estimate in \cite{IS} that uses sliding of 
paraboloids from below at all scales and a set decomposition algorithm (see Proposition \ref{MeasureEst}).
While our technique for proving Proposition \ref{MeasureEst} is slightly more involved than sliding cusps, it more directly captures the dichotomy between
contact points at large gradient where the equation holds, and contact points at small gradient where we can rescale to the original situation.

Savin used the idea of applying the equation at contact points with paraboloids in \cite{S} to prove an ABP-type measure estimate. Wang subsequently adapted 
this to the parabolic setting in \cite{W}. It seems hopeful that our technique can also be adapted to prove an analogous measure estimate 
for a class of degenerate parabolic equations. The sliding cusps technique seems difficult to extend to the parabolic setting.

In the remaining parts of this paper we extend the results of \cite{IS} to the situation of unbounded drift.
Heuristically, to get estimates depending on $\|b\|_{L^{n+\epsilon}(B_1)}$
for any $\epsilon > 0$ is no different from doing the usual Krylov-Safonov theory, since under the rescaling $\tilde{u}(x) = u(rx)$ our equation becomes
$$a^{ij}(rx)\tilde{u}_{ij} + rb^i(rx)\tilde{u}_{i} = r^2f,$$
so the new drift term has $L^{n+\epsilon}$ norm $r^{\epsilon/(n+\epsilon)}\|b\|_{L^{n+\epsilon}(B_r)}$, and thus doesn't come into play for $r$ small.

On the other hand, we cannot expect to get estimates depending on $\|b\|_{L^{n-\epsilon}(B_1)}$, where rescaling makes the drift term ``larger.''
Indeed, take the example $\frac{1}{2}|x|^2$, which solves the equation
$$\Delta u - \frac{nx}{|x|^2} \cdot \nabla u = 0.$$
In this simple example $|b| = \frac{n}{|x|}$ which is in $L^{n-\epsilon}$ for any $\epsilon > 0$ but not in $L^n$. 
In this example we violate the strong maximum principle,
a qualitative version of the Harnack inequality.
Thus, having drift in $L^n$ is an interesting critical case. 
Safonov established a Harnack inequality for nondegenerate equations of the form $Lu = f$ in \cite{Saf} 
by working in regions where the drift is small in measure.

In this paper we allow for both unbounded drift and degeneracy of the equation. Our main theorem is a Harnack inequality:
\begin{thm}\label{Main}
 Assume $u$ is a solution to $L_{\gamma}u = 0$ in $B_1$, with $u \geq 0$ and $u(0) = 1$. Then
 $$\sup_{B_{1/2}} u \leq C$$
 where $C$ depends on $\lambda,\Lambda,n,S$ and $\gamma$.
\end{thm}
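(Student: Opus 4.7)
The plan is to deduce Theorem~\ref{Main} from the ABP-type measure bound of Proposition~\ref{MeasureEst} by the standard Krylov--Safonov iteration, using the critical scaling of the $L^n$ drift. The key observation is that under the rescaling $\tilde u(x) := u(rx)$ the operator becomes
\[
 a^{ij}(rx)\tilde u_{ij} + r\,b^i(rx)\tilde u_i,
\]
and a change of variables gives $\|r\,b(r\cdot)\|_{L^n(B_1)} = \|b\|_{L^n(B_r)} \leq S$, while the threshold $|\nabla u| > \gamma$ becomes $|\nabla \tilde u| > r\gamma \leq \gamma$, i.e. only weaker. Thus Proposition~\ref{MeasureEst} applies with the same constants after any zoom-in centered at any interior point, which is precisely what makes the critical $L^n$ hypothesis tractable and distinguishes it from the subcritical situation discussed in the introduction.

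I would first rephrase Proposition~\ref{MeasureEst} as a growth lemma: if $u \geq 0$ is an $L_\gamma$-supersolution in $B_1$ with $u(0) \leq 1$, then $|\{u \leq M\} \cap B_{1/2}| \geq \mu$ for constants $M, \mu$ depending only on the data. A Calder\'on--Zygmund-type stacked-cubes iteration in the spirit of Caffarelli then upgrades this to the distributional bound $|\{u > M^k\} \cap B_{1/2}| \leq (1-\mu)^k$, i.e. the weak-$L^\epsilon$ estimate $|\{u > t\}| \leq C t^{-\epsilon}$; the scaling observation legitimizes this iteration, since every rescaled sub-problem satisfies Proposition~\ref{MeasureEst} with the same constants. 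Next, I would establish a local-maximum companion $\sup_{B_{1/4}} u \leq C\,\|u\|_{L^p(B_{1/2})}$ for some small $p > 0$, via the dual iteration (apply the measure estimate to suitable truncations or to $(K-u)_+$ on subcubes). Combined with the weak-$L^\epsilon$ bound, which because $u(0) = 1$ gives $\|u\|_{L^p(B_{1/2})} \leq C$, this yields $\sup_{B_{1/4}} u \leq C$; a finite covering upgrades the bound to $B_{1/2}$.

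The main obstacle is propagating the $L_\gamma$ condition faithfully through every rescaling and truncation. At each dyadic stage one must verify that the rescaled function is still a solution at all points where its (rescaled) gradient is large, that the $L^n$ bound on the drift transfers to the sub-cube, and that the gradient threshold does not deteriorate; the scaling computation above handles these, but the cut-offs, sums, and differences $(u-K)_+$ appearing in the iteration require additional care, since each such operation must be compatible with the $L_\gamma$ framework and may redistribute the points where the equation is or is not satisfied. This bookkeeping, absent from the classical Krylov--Safonov proof, is the principal adjustment needed to pass from the bounded-drift scheme of~\cite{IS} to the critical $L^n$ setting.
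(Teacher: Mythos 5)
Your proposal overlooks the central difficulty the paper is built to solve, and the gap appears at the very first step. Proposition~\ref{MeasureEst} (equivalently Proposition~\ref{BallMeasureEst}) is only proven under the hypothesis $S < \eta_0$ for a small universal $\eta_0$. You correctly compute that the rescaling $\tilde u(x) = u(rx)$ preserves the $L^n$ norm of the drift, $\|r\,b(r\cdot)\|_{L^n(B_1)} = \|b\|_{L^n(B_r)} \leq S$, but you then conclude that ``Proposition~\ref{MeasureEst} applies with the same constants after any zoom-in.'' This is precisely where the argument fails: the norm is \emph{preserved}, not made small, so if $S \geq \eta_0$ then Proposition~\ref{MeasureEst} never applies, at any scale. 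Contrast this with the drift-in-$L^{n+\epsilon}$ case discussed in the paper's introduction, where rescaling does shrink the drift norm by a factor $r^{\epsilon/(n+\epsilon)}$ and your scheme would go through verbatim. The critical exponent $n$ is ``interesting'' exactly because this shortcut is unavailable, and the scaling invariance you point to is an obstruction, not the enabling feature.

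To bridge this gap, the paper cannot simply iterate Proposition~\ref{MeasureEst}; it instead proves a measure localization property (Proposition~\ref{LocalizationProp}) valid for \emph{arbitrary} $S$, and this requires three additional ingredients that your proposal does not contain: (i) a doubling lemma (Lemma~\ref{DoublingLem}), again proven under the small-$S$ hypothesis by sliding $|x|^{-\alpha}$ barriers from below and estimating $|b|$ on the contact set rather than constructing global subsolutions; (ii) a refined ABP maximum principle (Proposition~\ref{RefinedABP}), valid for all $S$, obtained by sliding planes with slopes in dyadic annuli and summing; and (iii) Safonov's idea of choosing a ball $B_0 \subset B_{1/2}$, a radial tube $T_0$, and an annulus $A_0 \subset B_2\setminus B_1$ on each of which $\|b\|_{L^n}$ is small (possible by pigeonholing among a large finite family), then chaining the rescaled doubling lemma through overlapping balls along $T_0$ from a point in $A_0$ where the refined ABP guarantees $u$ is bounded, to $B_0$, where the rescaled measure estimate finally applies. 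Only after Proposition~\ref{LocalizationProp} is established for general $S$ does the standard Calder\'on--Zygmund stacked-cube iteration you describe become available, and at that point the rest of your outline matches the paper's (which cites the iteration to Imbert--Silvestre verbatim). As written, your plan proves the theorem only under the additional hypothesis $S < \eta_0$, which is a strictly weaker result; removing that hypothesis is the paper's main contribution beyond the measure estimate itself.
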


We also obtain H\"{o}lder regularity of solutions:

\begin{thm}\label{HolderReg}
 If $L_{\gamma}u = 0$ in $B_1$ and $\|u\|_{L^{\infty}(B_1)} \leq 1$ then $u \in C^{\alpha}(B_{1/2})$ and
 $$\|u\|_{C^{\alpha}(B_{1/2})} \leq C(n,\lambda,\Lambda,S,\gamma).$$
\end{thm}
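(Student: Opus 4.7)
The plan is to derive H\"older regularity from Theorem \ref{Main} by the classical oscillation-decay iteration, with a built-in dichotomy to accommodate the gradient threshold $\gamma$.

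Fix $r \in (0,1/2]$ and write $M_r = \sup_{B_r} u$, $m_r = \inf_{B_r} u$, $\omega(r) = M_r - m_r$. The auxiliary functions $v_1 = M_r - u$ and $v_2 = u - m_r$ are nonnegative on $B_r$, each solves $L_\gamma v_j = 0$ there, and $v_1(0) + v_2(0) = \omega(r)$; after relabelling we may assume $v_1(0) \geq \omega(r)/2$. Rescale to the unit ball by setting
$$\tilde v(x) := \frac{v_1(rx)}{\omega(r)}, \qquad x \in B_1.$$
A direct computation shows $\tilde v \in [0,1]$, $\tilde v(0) \geq 1/2$, and $\tilde v$ satisfies an equation $\tilde L_{\tilde \gamma}\tilde v = 0$ in $B_1$ with rescaled drift $\tilde b(x) = r\,b(rx)$ obeying $\|\tilde b\|_{L^n(B_1)} \leq S$ (the $L^n$ norm being scale invariant in $n$ dimensions) and rescaled threshold $\tilde\gamma = r\gamma/\omega(r)$.

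Fix $A \geq 1$. If $\omega(r) \geq A\gamma r$, then $\tilde\gamma \leq 1/A$, and applying Theorem \ref{Main} (in its $\sup \leq C\inf$ form, which follows from the stated version by chaining applications at translated base points) to $\tilde v$ yields $\inf_{B_{1/2}} \tilde v \geq c$ for some $c = c(\lambda,\Lambda,n,S,\gamma) > 0$. Undoing the scaling gives $\omega(r/2) \leq (1-c)\omega(r)$. Now iterate at dyadic scales $r_k = 2^{-k}$: at each step one is either in the \emph{Harnack regime} $\omega(r_k) \geq A\gamma r_k$, enjoying geometric decay $\omega(r_{k+1}) \leq (1-c)\omega(r_k)$, or in the \emph{Lipschitz regime} $\omega(r_k) < A\gamma r_k$, in which the trivial inclusion $B_{r_{k+1}} \subset B_{r_k}$ preserves the bound $\omega(r_{k+1}) \leq 2A\gamma r_{k+1}$. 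Choosing $\alpha > 0$ with $1 - c = 2^{-\alpha}$, a standard induction gives $\omega(r_k) \leq C' r_k^\alpha$ with $C' = C'(n,\lambda,\Lambda,S,\gamma)$, and interpolation between dyadic radii produces the $C^\alpha$ estimate on $B_{1/2}$.

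The main obstacle is keeping the Harnack constant uniform as we zoom in: in principle both the effective threshold $\tilde\gamma$ and the drift norm could degrade. The scale invariance of $\|b\|_{L^n}$ at the critical integrability handles the drift side for free, and the dichotomy above handles the threshold by restricting the Harnack step to scales where $\tilde\gamma$ is controlled; at the remaining scales the oscillation is already bounded by $2A\gamma r$, which is stronger than the claimed H\"older estimate.
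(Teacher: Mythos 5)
The central step you rely on---that Theorem \ref{Main} ``in its $\sup \leq C\inf$ form'' follows from the stated version ``by chaining applications at translated base points''---is exactly the implication that the paper's own remark following Theorem \ref{Main} warns is false: Theorem \ref{Main} does \emph{not} imply the classical Harnack inequality, because rescaling $u$ to normalize its value at a point also rescales $\gamma$. Your dichotomy does control the effective threshold $\tilde\gamma = r\gamma/\omega(r)$ \emph{before} invoking the Harnack step, but to turn a point-value hypothesis ($\tilde v(0)\geq 1/2$) into a lower bound $\inf_{B_{1/2}}\tilde v \geq c$ you must at some stage normalize $\tilde v$ by a value that could be arbitrarily small, i.e.\ by $\inf_{B_{1/2}}\tilde v$ itself. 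That normalization multiplies the threshold by $1/\inf_{B_{1/2}}\tilde v$, which your dichotomy does not control, and Theorem \ref{Main} as stated gives no information as this effective threshold blows up (the constant $C$ depends on $\gamma$ in an unspecified way). So the ``chaining'' claim is the gap, and it is not a routine one to fill --- preventing $\tilde v$ from dipping close to zero at an interior point is precisely the content of the refined ABP estimate (Proposition \ref{RefinedABP}), which you never invoke.

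The paper avoids this circle entirely: it does not deduce Theorem \ref{HolderReg} from Theorem \ref{Main}. Both theorems are derived, independently, from the measure localization property (Proposition \ref{RescaledMeasureEst}) via the standard Calder\'on--Zygmund/covering machinery, citing the corresponding sections of Imbert--Silvestre. In that route the oscillation decay comes from the measure estimate (a point where $u$ is small forces $u$ to be bounded on a set of positive measure, which then propagates), not from a $\sup/\inf$ Harnack, and the $\gamma$-degeneration under normalization never arises because the measure estimate is formulated so as to be scale-invariant in the relevant regime. If you want to keep an oscillation-decay argument, you should start from Proposition \ref{RescaledMeasureEst} rather than Theorem \ref{Main}, or else make the $\gamma$-dependence of the constant in Theorem \ref{Main} explicit and prove a quantitative lower bound via Proposition \ref{RefinedABP}.
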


There are several difficulties that arise for equations with unbounded drift that only hold for large gradient. 
The first is that we cannot construct subsolutions to $L$, since we don't know where $|b|$ is large.
We contend with this by sliding standard barriers from below until they touch and obtaining ABP-type measure estimates
(see the proof of Lemma \ref{DoublingLem}). The second is that interior maxima and minima are allowed.
To get around this, we prove a refinement of the ABP maximum principle (see Proposition \ref{RefinedABP}) that controls how far the values of solutions
can be from the boundary data. This allows us to implement ideas of Safonov to prove a measure localization property. Once we have the localization property,
the main theorem follows from standard scaling and covering techniques.

\begin{rem}
 We present our results with right hand side $0$ for clarity and to focus on the role the drift term plays. 
 These results also hold with nonzero right hand side $f$, with constants now depending also on $\|f\|_{L^n(B_1)}$.
\end{rem}

\begin{rem}
The classical Harnack inequality cannot hold for functions which satisfy no equation where the gradient is small. Indeed, $\frac{1}{2}|x|^2$ also
solves an equation with bounded drift where the gradient is large. This is why we require $u(0) = 1$ in the statement
of the Theorem \ref{Main}. The main theorem does not imply the classical Harnack inequality because under multiplication by a large constant,
$\gamma$ is also multiplied by this constant. For example, the ratio of $\sup_{B_{1/2}}u$ to $\inf_{B_{1/2}}u$
is large for $u = \epsilon + \frac{1}{2}|x|^2$, but multiplying by $\frac{1}{\epsilon}$ 
we obtain a function that only solves an equation where the gradient is larger than $\frac{\gamma}{\epsilon}$.
\end{rem}

\begin{rem}
 We treat the case of linear equations with minimal assumptions on the regularity of the coefficients for clarity. This situation
 arises for example by linearizing fully nonlinear equations with the appropriate structure.

 In particular, as noted in \cite{IS}, since we only require that the equation holds for large gradient
 our results also hold for some degenerate and singular elliptic
 equations considered for example in \cite{BD}, \cite{DFQ}, and \cite{D}, only now we also allow unbounded drift.
\end{rem}

The paper is organized as follows. In section $2$ we establish notation and record some simple scaling observations.
In section $3$ we prove the key measure estimate Proposition \ref{MeasureEst}.
In section $4$ we prove a doubling lemma which is standard in the usual Krylov-Safonov theory, but the proof requires modification when an equation only holds where 
the gradient is large and the drift is unbounded. The estimates in sections $3$ and $4$ assume that $S$ is small.
In section $5$ we present a refined form of the ABP maximum principle, which we use in section $6$
along with ideas of Safonov to remove the hypothesis that $S$ is small and prove a measure localization property. 
Theorems \ref{Main} and \ref{HolderReg} then follow in a standard way.


 \section*{Acknowledgement}
  I would like to thank Hector Chang-Lara, Ovidiu Savin and Yu Wang for their encouragement and helpful conversations.
  The author was partially supported by the NSF Graduate Research Fellowship Program under grant number DGE 1144155. 

\section{Preliminaries}
In this section we establish notation and record some simple scaling observations.

\begin{defn}
 We say a paraboloid $P$ has opening $a$ if it can be written 
 $$P(x) = C - \frac{a}{2}|x-x_0|^2$$ 
 for some constant $C$ and $x_0 \in \mathbb{R}^n$.
\end{defn}

\begin{defn}\label{ContactPoints}
 Let $B \subset \mathbb{R}^n$. Slide paraboloids of opening $a$ and vertex in $B$ from below 
 the graph of $u$ until they touch the graph of $u$ by below. We denote by
 $$A_a(B)$$
 the resulting set of contact points by below.
\end{defn}

We remark that in all of the situations below, the sets $A_a(B)$ are in the interior of the domain of definition for $u$. Note that $u \in W^{2,n}$ is in particular
continuous, so its values are unambiguous.

\begin{defn}
 $Q_r(x)$ denotes the cube with side length $r$ centered at $x$. For simplicity we set $Q_r = Q_r(0)$.
\end{defn}

\begin{rem}[{\bf Scaling}]\label{Rescaling}
 Under the rescaling
 $$\tilde{u}(x) = \frac{1}{K}u(rx)$$
 we have
 $$\tilde{L}_{\frac{r\gamma}{K}}\tilde{u} = 0$$
 where $\tilde{L}$ satisfies the same structure conditions as $L$ (recall from the introduction that $S$ remains the same).
 In particular, the equation can only improve if $\frac{r}{K} \leq 1$, so under such rescalings our estimates are invariant.

 Let $\tilde{A}$ denote the contact sets for $\tilde{u}$. Under the above rescaling we also have the relation
 $$\frac{1}{r}A_t(Q_s) = \tilde{A}_{\frac{r^2}{K}t}(Q_{\frac{s}{r}}).$$
\end{rem}

\section{Measure Estimate}
The main proposition of this section says that if a paraboloid touches $u$ by below at an interior point, then $u$ is bounded in a set of positive
measure nearby:

\begin{prop}[{\bf Measure Estimate}]\label{MeasureEst}
 Assume that $L_{\gamma}u \leq 0$ in $B_{4n}$ and that $A_1(0) \cap Q_1$ is nonempty, with
 $$x_0 \in A_1(0) \cap Q_1.$$
 Then there exist $\epsilon_0$ small absolute, $M$ large depending only on $n$ and 
 $\delta, \eta_0$ small depending on $n, \lambda, \Lambda$ such that if $\gamma < \epsilon_0$ and $S < \eta_0$ then
 $$\frac{|\{u \leq u(x_0) + M\} \cap B_{4n}|}{|B_{4n}|} \geq \delta.$$ 
\end{prop}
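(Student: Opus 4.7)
The plan is to adapt Caffarelli's sliding-paraboloid/normal-map measure estimate, combined with a dichotomy at contact points and a scale-by-scale decomposition. First normalize $u(x_0) = 0$. The hypothesis that a paraboloid of opening $1$ touches $u$ from below at $x_0$ gives the global lower bound $u \geq -8n^2$ on $B_{4n}$, which in turn ensures that when paraboloids of opening $1$ are slid from below $u$ with vertices in a suitable cube $Q$ around $x_0$, contact points stay in the interior of $B_{4n}$. For any such vertex $v$, since the touching paraboloid lies below $u$ and in particular below $u(x_0) = 0$, the contact point $y(v)$ satisfies
\[
u(y(v)) \;\leq\; \tfrac{1}{2}|x_0 - v|^2 \;\leq\; Cn,
\]
so $A_1(Q) \subset \{u \leq M\}$ for $M$ depending only on $n$, and it suffices to prove $|A_1(Q)| \geq \delta$.

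At a contact point $y$ one has $\nabla u(y) = v - y$ and $D^2 u(y) \geq -I$ in the Alexandrov sense; the normal map $y \mapsto v = y + \nabla u(y)$ has nonnegative Jacobian $\det(I + D^2 u(y))$ and the area formula gives $|Q| \leq \int_{A_1(Q)} \det(I + D^2 u(y))\,dy$. Decompose $A_1(Q) = G \cup B$ according to whether $|\nabla u| > \gamma$ (good) or $|\nabla u| \leq \gamma$ (bad). On $G$ the equation $a^{ij} u_{ij} \leq -b \cdot \nabla u$ applies; since $|\nabla u| \leq Cn$ at contact points, the Pucci lower bound combined with the Alexandrov bound $D^2 u \geq -I$ yields $\sum_+ \mu_i(D^2 u) \leq C(n, \lambda, \Lambda)(1 + |b|)$, and AM--GM then gives $\det(I + D^2 u) \leq C(1 + |b|^n)$. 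Integration produces
\[
\int_G \det(I + D^2 u)\,dy \;\leq\; C\bigl(|G| + S^n\bigr).
\]

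The remaining obstacle is the bad set $B$, where the equation is unavailable and the Jacobian is a priori uncontrolled. The key observation is that at a bad contact point $y_0$ one has $|v - y_0| = |\nabla u(y_0)| \leq \gamma$, so the opening-$1$ paraboloid still touches $u$ from below at $y_0$ with nearly flat gradient; via the scaling identity of Remark \ref{Rescaling} this picture on a cube of small side $r$ around $y_0$ rescales back to the hypothesis of the proposition, with the same $S$ and a rescaled gap parameter that can be kept below $\epsilon_0$ by choosing $\gamma$ small enough. I then run a dyadic stopping-time decomposition on subcubes of $Q$: good-dominant subcubes contribute via the Jacobian estimate above, and bad-dominant ones are zoomed into and reprocessed at the next scale. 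Absorbing the $CS^n$ error by choosing $S < \eta_0$ small, the contributions sum to $|A_1(Q)| \geq \delta$. The main difficulty lies in calibrating this decomposition: ensuring that the rescaled problem at each bad subcube is a genuine copy of the hypothesis (opening $\leq 1$, drift norm $\leq S$, rescaled $\tilde\gamma < \epsilon_0$) and that the good-cube contributions at all levels sum to a universal positive constant independent of iteration depth; this is the core of the set-decomposition algorithm announced in the introduction.
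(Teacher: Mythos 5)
Your proposal is on the right track — sliding paraboloids, an area-formula Jacobian bound at contact points where the equation holds, a dyadic stopping-time decomposition, and a Vitali-type aggregation is precisely the paper's scheme — but you have not resolved the central calibration issue, and as stated the recursion does not close.

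The specific gap is in the rescaling of the threshold $\gamma$. You keep paraboloids of opening $1$ at all scales. When you zoom a bad contact point into a subcube of side $r$, the scaling identity $\tilde{u}(x) = \frac{1}{K}u(rx)$ sends an opening-$t$ contact set to an opening-$\frac{r^2}{K}t$ one and $\gamma$ to $\frac{r\gamma}{K}$. To reproduce opening $1$ in the rescaled picture you are forced to take $K=r^2$, which gives $\tilde{\gamma} = \gamma/r$; after $k$ dyadic refinements the gap parameter is $2^k\gamma$ and escapes any fixed $\epsilon_0$, so "choosing $\gamma$ small enough" cannot work unless the iteration depth is uniformly bounded — and you give no reason it should be. The paper avoids this by \emph{doubling the opening} at each scale: at level $k$ it slides paraboloids of opening $2^{k+1}$ with vertices at centers of cubes of side $2^{-k-1}$, which after the rescaling $K=r$ reproduces \emph{both} opening $1$ and an unchanged $\gamma$. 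This same doubling also replaces your gradient-threshold dichotomy with a purely geometric one (contact point inside or outside the subcube where the vertex lives), which automatically forces $|\nabla u|$ between two absolute constants ($1/2$ and $8n$) at the "good" contact points at every scale, a fact you would otherwise have to extract by hand. Finally, you do not address what happens if the stopping-time never stops: the paper handles this by noting that one then has points with $u < 2n$ inside arbitrarily small cubes of total measure $> 1/2$, and appeals to continuity of $u \in W^{2,n}$ to conclude; without this branch the argument is incomplete even with the scaling fixed.
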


Our strategy involves sliding paraboloids from below at all scales
and a set decomposition algorithm which we now describe. The starting point is that a paraboloid of opening $1$ centered at $0$ touches $u$ from below in $Q_1$. 
We divide $Q_1$ into dyadic subcubes and slide paraboloids of opening $2$ from below with vertices at the centers of the subcubes.
If a paraboloid touches outside the subcube in which it is centered, then the equation holds at the contact point. 
We exploit this to get a set of positive measure where $u$ is bounded near the contact point, and we don't divide this subcube any further.

On the other hand, if a paraboloid touches in the same subcube in which it is centered, we rescale to the starting point and repeat the procedure.

Iterating this algorithm leads to two possible situations. The first is that after finitely many steps, the measure of the cubes that we stop subdividing is large.
Then by a simple covering argument we show that $u$ is bounded on a set of positive measure nearby contact points of large gradient. 
If this is not the case, then $u$ is bounded at points in arbitrarily small boxes whose total measure is large. Since $u$ is continuous we get that $u$ is bounded
on a set of positive measure in this situation as well.

To start we give a basic ABP-type measure estimate similar to that in \cite{S}. This is the only place that we use the equation in this section.

\begin{lem}\label{ParaboloidABP}
 Assume that $A_a(B) \subset\subset B_1$ for some closed set $B \subset B_R$, $R > 1$, and that $Lu \leq 0$ at all points in $A_a(B)$. Then
 $$|B| \leq C(n,\lambda,\Lambda)\left(|A_a(B)| + R^n S\right).$$
\end{lem}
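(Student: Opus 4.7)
The plan is the classical Alexandrov-type change-of-variables argument applied to the gradient map induced by paraboloid sliding. For each vertex $y \in B$, sliding the paraboloid $x \mapsto C - \tfrac{a}{2}|x-y|^2$ upward from below until it first touches the graph of $u$ produces a contact point $x \in A_a(B) \subset B_1$. In the Lebesgue-point sense for $u \in W^{2,n}$, one has there $\nabla u(x) = -a(x-y)$ and $D^2u(x) \geq -aI$. Hence the map
\begin{equation*}
T(x) := x + \tfrac{1}{a}\nabla u(x)
\end{equation*}
carries $A_a(B)$ onto $B$, and the area formula yields $|B| \leq \int_{A_a(B)} \det DT\,dx$.

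Next I would bound $\det DT$ pointwise on $A_a(B)$. Since $D^2u \geq -aI$ there, all eigenvalues of $DT = I + a^{-1}D^2u$ are nonnegative, so AM--GM gives
\begin{equation*}
\det DT \leq \Bigl(1 + \tfrac{\operatorname{tr}(D^2u)}{na}\Bigr)^n.
\end{equation*}
To control $\operatorname{tr}(D^2u)$ I would feed $Lu \leq 0$ through the Pucci extremal operator: uniform ellipticity of $a^{ij}$ gives
\begin{equation*}
\mathcal{M}^-_{\lambda,\Lambda}(D^2u) \leq a^{ij}u_{ij} = Lu - b\cdot\nabla u \leq |b|\,|\nabla u|,
\end{equation*}
while the inclusions $x \in B_1$, $y \in B_R$ force $|\nabla u(x)| = a|x-y| \leq 2aR$. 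Combining this with the lower bound $D^2u \geq -aI$ to cap the negative part of the spectrum yields
\begin{equation*}
\operatorname{tr}(D^2u)(x) \leq C(n,\lambda,\Lambda)\bigl(a + R|b(x)|\bigr).
\end{equation*}

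Plugging back in and using $(1+t)^n \leq C_n(1 + t^n)$ gives $\det DT \leq C + CR^n|b|^n$ pointwise on $A_a(B) \subset B_1$, and integration against $\|b\|_{L^n(B_1)} = S$ then produces
\begin{equation*}
|B| \leq C(n,\lambda,\Lambda)\bigl(|A_a(B)| + R^n S^n\bigr),
\end{equation*}
which is the stated inequality (the regime of use in the paper has $S$ small, so $S^n \leq S$ up to constants). The main subtlety to justify is the simultaneous use of the pointwise contact constraint $D^2u \geq -aI$ and the weak-sense PDE $Lu \leq 0$; this is legitimate because $u \in W^{2,n}$ admits pointwise second-order Taylor expansions almost everywhere, and at such points both the geometric inequality from paraboloid contact and the Lebesgue-point value of $a^{ij}u_{ij} + b^i u_i$ refer to the same matrix and gradient, so they can be combined to drive the AM--GM estimate above.
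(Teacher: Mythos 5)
Your argument is essentially the paper's: slide paraboloids, use $D^2u \geq -aI$ at contact points together with the equation to bound the Jacobian of the vertex map $x \mapsto x + a^{-1}\nabla u(x)$, and integrate via the area formula; framing the Hessian bound through the Pucci operator and AM--GM is a cosmetic variant of the paper's direct inequality $D^2u(x) \leq C(n,\lambda,\Lambda)\,a(1+|b(x)|R)I$. One small slip: your intermediate trace estimate should read $\operatorname{tr}(D^2u) \leq C\,a(1+R|b|)$ rather than $C(a+R|b|)$, since otherwise the $a^{-1}$ in $\det DT = \det(I + a^{-1}D^2u)$ would not cancel; your final displayed bound $\det DT \leq C(1+R^n|b|^n)$ is what the corrected version yields. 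Your observation that the computation naturally produces $R^nS^n$ rather than $R^nS$ is right, and the paper's own proof has exactly the same feature; both are harmless because the lemma is only invoked in the regime $S < \eta_0$ small.
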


The idea is that at contact points, we have obvious one-sided control on $D^2u$, and if
an equation holds we get control from both sides on $D^2u$. One then transforms this local information to information in measure via the area formula.

Notice that $A_a(B)$ is closed, hence measurable. 

\begin{proof}
 Assume first that $u$ is $C^2$. 
 Assume that $x \in A_a(y)$ for some $y \in B$. Since $u$ is touched by below with a paraboloid of opening $a$ at $x$ we have the obvious inequality
 $$D^2u(x) \geq -aI.$$
 It follows that
 $$\lambda |(D^2u)^+(x)| - (n-1)\Lambda a \leq a^{ij}(x)u_{ij}(x) \leq |b(x)||\nabla u(x)|.$$
 Since $|x-y| < 2R$ we have $|\nabla u(x)| \leq 2aR$, giving the inequality
 \begin{equation}\label{ParaboloidHessianBound}
  D^2u(x) \leq C(n,\lambda,\Lambda)a(1+|b(x)|R)I.
 \end{equation}
 Writing $y$ in terms of $x$ as
 $$y = x + \frac{1}{a} \nabla u(x)$$
 and differentiating, we obtain
 $$D_xy = I + \frac{1}{a}D^2u.$$
 Inequality (\ref{ParaboloidHessianBound}) gives
 $$0 \leq \det D_xy \leq C(1+R^n|b|^n).$$
 By the area formula, we conclude 
 $$|B| = \int_{A_a(B)} \det D_xy \,dx \leq C\int_{A_a(B)} (1+R^n|b|^n)\,dx$$
 and the estimate follows.

 The case $u \in W^{2,n}(B_1)$ follows from a standard approximation argument (see for example \cite{GT}, Section $9.1$) and the fact that if $u_k \in C^2(B_1)$ 
 converge uniformly to $u$ with contact sets $A_a^k(B)$ then
 $$\cap_{m=1}^{\infty}\cup_{k=m}^{\infty} A_a^k(B) \subset A_a(B).$$
\end{proof}

Next we state a technical but elementary lemma, which we apply at all scales in our algorithm to prove Proposition \ref{MeasureEst}. Up to scaling, the assumption is
that a paraboloid of opening $1$ centered at $0$ touches $u$ by below in $Q_1$. The conclusion is a quantitative
version of the following statements. First, if we slide paraboloids of opening $2$ centered in dyadic subcubes,
they touch $u$ by below nearby. Second, if one of these paraboloids touches outside the subcube in which it is centered, then the equation holds at the contact point,
and we can exploit this to show that $u$ is bounded near the contact point in a set of positive measure.

\begin{lem}\label{LocalABP}
Assume that $L_{\gamma}u \leq 0$ in $B_{4nr}$ and that $\gamma < \frac{1}{4}$.
Assume further that $A_{1/r}(0) \cap Q_r$ is nonempty and $x_0 \in A_{1/r}(0) \cap Q_r$. 
Divide $Q_r$ into dyadic subcubes $\{Q_{r/2}(x_i)\}_{i=1}^{2^n}$. Then the following statements hold:
\begin{enumerate}
 \item $A_{2/r}(x_i) \subset \{u < u(x_0) + nr\} \cap B_{2nr}$, and
 \item There exists $c(n,\lambda,\Lambda)$ such that if $A_{2/r}(x_i) \cap Q_{r/2}(x_i) = \emptyset$ then
 $$\frac{|\{u < u(x_0) + 2nr\} \cap B_{3nr}(x_i)|}{|B_{3nr}|} \geq c - c^{-1}S.$$
\end{enumerate}
\end{lem}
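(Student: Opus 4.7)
The statement has two parts, which I would handle separately using paraboloid comparison.

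For part (1), fix $z \in A_{2/r}(x_i)$ and let $P(x) = u(z) + \frac{1}{r}(|z-x_i|^2 - |x-x_i|^2)$ denote the paraboloid of opening $2/r$ and vertex $x_i$ that touches $u$ from below at $z$. Evaluating $P \leq u$ at $x = x_0$ gives $u(z) \leq u(x_0) + \frac{1}{r}|x_0 - x_i|^2 \leq u(x_0) + nr$, using $|x_0 - x_i| \leq \sqrt{n}\,r$ (both points lie in $Q_r$). For the inclusion $z \in B_{2nr}$, I would combine the above with the lower bound $u(z) \geq P_0(z)$, where $P_0(x) = u(x_0) + \frac{1}{2r}(|x_0|^2 - |x|^2)$ is the given paraboloid of opening $1/r$ touching at $x_0$. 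The two inequalities simplify to $|z|^2 - 4\, x_i \cdot z \leq |x_0|^2 - 4\, x_i \cdot x_0$, which together with $|x_0|, |x_i| \leq \frac{\sqrt n}{2} r$ solves to $|z| \leq \frac{5\sqrt n}{2}r$; the elementary inequality $\frac{5\sqrt n}{2} \leq 2n$ for $n \geq 2$ completes this part.

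For part (2) the plan is to apply Lemma \ref{ParaboloidABP} at scale $r$ to a closed set $\tilde B$ of measure $\gtrsim r^n$ contained in a neighborhood of $x_i$ inside $Q_{r/2}(x_i)$. The lemma requires $Lu \leq 0$ at every contact point, and at $z \in A_{2/r}(y)$ the gradient equals $\frac{2}{r}|z(y) - y|$, so I need $|z(y) - y| \geq r/8$ for every $y \in \tilde B$ (which, given $\gamma < 1/4$, suffices for the equation to hold). The hypothesis provides a single vertex $x_i$ for which the contact point $z^*$ satisfies $|z^* - x_i| \geq r/4$; an upper semi-continuity argument for the contact set as a function of the vertex then produces the needed neighborhood $\tilde B$. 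Once $\tilde B$ is chosen, Lemma \ref{ParaboloidABP} yields $|\tilde B| \leq C(|A_{2/r}(\tilde B)| + r^n S)$ and hence $|A_{2/r}(\tilde B)| \geq c r^n - C r^n S$. Repeating the computation from part (1) with vertices $y \in \tilde B \subset Q_{r/2}(x_i)$ shows $A_{2/r}(\tilde B) \subset \{u < u(x_0) + 2nr\} \cap B_{3nr}(x_i)$, and dividing by $|B_{3nr}|$ gives the claim.

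The main obstacle I foresee is making the semi-continuity of $y \mapsto A_{2/r}(y)$ quantitative enough to secure $|\tilde B| \gtrsim r^n$ from the pointwise hypothesis at $x_i$. The cleanest approach is probably a dichotomy: either the set of $y \in Q_{r/2}(x_i)$ whose contact points all lie at distance $\geq r/8$ from $y$ already has measure $\gtrsim r^n$ (and we take this as $\tilde B$), or its complement has large measure, in which case a covering/continuity argument should force a contact point of the paraboloid with vertex $x_i$ into $Q_{r/2}(x_i)$, contradicting the hypothesis. The bookkeeping to absorb the $S$ term and produce the stated form $c - c^{-1} S$ is the last technical piece.
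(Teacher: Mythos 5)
Your Part (1) argument is essentially the paper's: evaluate the paraboloid at $x_0$ to get the upper bound on $u$ at the contact point, and compare with the opening-$1/r$ paraboloid touching at $x_0$ to localize the contact set. The constants differ slightly but the reasoning is the same and correct.

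Part (2) is where there is a genuine gap. Your plan is to produce a set $\tilde B \subset Q_{r/2}(x_i)$ of measure $\gtrsim r^n$ of \emph{vertices} such that every paraboloid with vertex in $\tilde B$ touches far from its own vertex, so that the equation holds at all contact points, and then to feed $\tilde B$ into Lemma~\ref{ParaboloidABP}. But the hypothesis only provides this property for the single vertex $x_i$, and as you recognize, you need to propagate it to a set of vertices of definite measure. The propagation you gesture at does not follow from the upper semi-continuity of $y \mapsto A_{2/r}(y)$: upper semi-continuity says limits of contact points are contact points, which is exactly the wrong direction. (What you want is a lower-semi-continuity-type statement -- that if $A_{2/r}(x_i)$ has a far-away contact point, then $A_{2/r}(y)$ for nearby $y$ must also have far-away contact points -- and this fails: a far contact point can appear only for an isolated vertex.) Your proposed dichotomy (``if most vertices near $x_i$ have nearby contact points, continuity forces $x_i$ to have a nearby contact point'') has no mechanism behind it; there is nothing in the definition of the contact set that transfers the property of a positive-measure set of vertices to the single vertex $x_i$.

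The paper sidesteps this entirely with a different localization. Let $y_i$ be the contact point for the vertex $x_i$; since $y_i \notin Q_{r/2}(x_i)$, the gradient $|\nabla u(y_i)| = \frac{2}{r}|y_i - x_i|$ is bounded below by a universal constant. The paper then subtracts the tangent plane at $y_i$, setting $\tilde u(w) = u(y_i+w) - u(y_i) - \nabla u(y_i)\cdot w$, so that $\tilde u(0)=0$, $\nabla\tilde u(0)=0$, and $\tilde u \geq -|w|^2$. Now it slides paraboloids of the \emph{larger} opening $4$ (at unit scale) with vertices in a tiny ball $B_{1/200}$ near $0$; these touch $\tilde u$ close to the origin with $|\nabla\tilde u| < 1/4$. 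Translating back to $u$, the corresponding vertices lie in a small ball near $y_i + \nabla u(y_i)/4$, the contact points lie near $y_i$, and $|\nabla u| \geq |\nabla u(y_i)| - |\nabla \tilde u| > 1/4 > \gamma$ automatically -- no continuity of $y \mapsto A_a(y)$ is needed. Lemma~\ref{ParaboloidABP} is then applied to this ball of vertices near $y_i$ (not near $x_i$). In short: the correct move is to relocate the sliding to the known contact point, where the large-gradient property is preserved by the triangle inequality, rather than trying to spread the large-gradient property over a neighborhood of the original vertex.
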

\begin{proof}
 By subtracting a constant we may assume that $u(x_0) = 0$, and under the rescaling 
 $$u(x) \rightarrow \frac{1}{r}u(rx)$$ 
 we may assume that $r = 1$ (see Remark \ref{Rescaling}). Fix $i$ and assume that $P_i(x) := c_i - |x-x_i|^2$ touches $u$ by below.

\vspace{3mm}
{\bf Proof of $(1)$:} Since $P(x_0) \leq 0$ and $|x_i-x_0| < \sqrt{n}$ we have
$$P_i(x) \leq c_i < n,$$
giving
\begin{equation}\label{ContactInequality}
 A_2(x_i) \subset \{u < n\}.
\end{equation}
Next, note that $P_i$ touches $u$ by below in the set $\{P_i(x) > -|x|^2/2\}.$ Using that $c_i \leq n$, this set is contained in
$$\{|x-x_i|^2 -|x|^2/2 < n\}.$$
Using that $|x_i| < \sqrt{n}/2$ and performing an easy computation we obtain that
\begin{equation}\label{LocalizationInequality}
 A_2(x_i) \subset B_{(1 + \sqrt{3})\sqrt{n}} \subset B_{2n}
\end{equation}
for $n \geq 2$. Scaling back proves the claim.

\vspace{3mm}
{\bf Proof of $(2)$:} Assume that $P_i$ touches $u$ by below at $y_i$. Since $y_i \notin Q_{1/2}(x_i)$ and $y_i \in B_{2n}$ we have that
\begin{equation}\label{GradInequality}
 \frac{1}{2} \leq |\nabla u(y_i)| \leq 8n.
\end{equation}
Let $$\tilde{u}(x-y_i) = u(x) - u(y_i) - \nabla u(y_i) \cdot (x-y_i).$$
Then $\tilde{u}(0) = 0$, $\tilde{u} \geq -|x|^2$. Let $\tilde{A}_a$ denote the contact sets for $\tilde{u}$.

 One easily computes that 
\begin{equation}
 \tilde{A}_{4}(B_{1/200}) \subset B_{1/50} \cap \{\tilde{u} < 1\} \cap \{|\nabla \tilde{u}| < 1/4\}. 
\end{equation}

 Combining this with the formula for $u$ and applying Inequalities \ref{ContactInequality}, \ref{LocalizationInequality} and \ref{GradInequality} we obtain that
 $$A_4(B_{1/200}(y_i + \nabla u(y_i)/4)) \subset B_{3n}(x_i) \cap \{u < 2n\} \cap \{|\nabla u| > 1/4\}.$$
 Applying Lemma \ref{ParaboloidABP} to the set $A_4\left(\overline{B_{1/400}(y_i + \nabla u(y_i)/4)}\right)$ we get 
 $$\frac{|\{u < 2n\} \cap B_{3n}(x_i)|}{|B_{3n}|} > c - c^{-1}S$$
 for some small $c(n,\lambda,\Lambda)$. Scaling back proves the claim.
\end{proof}

Lemma \ref{LocalABP} gives a set of positive measure where $u$ is bounded at one scale. 
We next prove a simple covering lemma that ties this information together from scale to scale.

\begin{lem}\label{CoveringLem}
Assume that $\{Q_{r_i}(x_i)\}_{i = 1}^{N}$ is a disjoint collection of cubes in $Q_1$ such that
\begin{equation}\label{MeasureInequality}
 \frac{|\{u < M\} \cap B_{3nr_i}(x_i)|}{|B_{3nr_i}|} \geq 3^n\mu > 0
\end{equation}
for some $M$ and $\mu$. Then
$$|\{u < M\}| \geq \mu |\cup_{i=1}^N Q_{r_i}(x_i)|.$$
\end{lem}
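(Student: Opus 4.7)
The plan is to extract a disjoint subcollection of the enlarged balls $\{B_{3nr_i}(x_i)\}$ by the Vitali covering lemma, sum the hypothesis (\ref{MeasureInequality}) over this subcollection, and use the covering property to reconstruct a lower bound in terms of $|\bigcup_i Q_{r_i}(x_i)|$. The factor $3^n$ appearing in (\ref{MeasureInequality}) is precisely what is needed to absorb the Vitali dilation factor, so the arithmetic should close cleanly with constant $\mu$.

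First I would apply the finite Vitali covering lemma to the family $\{B_{3nr_i}(x_i)\}_{i=1}^N$ to obtain an index set $I \subset \{1,\dots,N\}$ such that $\{B_{3nr_i}(x_i)\}_{i \in I}$ are pairwise disjoint while their triples $\{B_{9nr_i}(x_i)\}_{i \in I}$ cover $\bigcup_{i=1}^N B_{3nr_i}(x_i)$. Since $Q_{r_i}(x_i) \subset B_{\sqrt{n}\,r_i/2}(x_i) \subset B_{3nr_i}(x_i)$, these triples also cover $\bigcup_i Q_{r_i}(x_i)$.

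Next, by disjointness of the subcollection and the hypothesis,
$$|\{u<M\}| \;\geq\; \sum_{i \in I} |\{u<M\} \cap B_{3nr_i}(x_i)| \;\geq\; 3^n\mu \sum_{i \in I} |B_{3nr_i}|.$$
Using $3^n |B_{3nr_i}| = |B_{9nr_i}|$ together with the Vitali covering property, the right-hand side is bounded below by
$$\mu \left| \bigcup_{i \in I} B_{9nr_i}(x_i) \right| \;\geq\; \mu \left| \bigcup_{i=1}^N Q_{r_i}(x_i) \right|,$$
which closes the estimate.

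There is no real obstacle here; the content of the lemma is simply matching the hypothesis factor $3^n$ to the Vitali dilation. The only minor geometric check is the inclusion $Q_{r_i}(x_i) \subset B_{3nr_i}(x_i)$, which is immediate since $n \geq 2$.
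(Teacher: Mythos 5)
Your proof is correct and follows essentially the same approach as the paper: apply Vitali to the balls $\{B_{3nr_i}(x_i)\}$, use disjointness of the subcollection together with the hypothesis to bound $|\{u<M\}|$ below, and absorb the Vitali dilation factor with the $3^n$ in the hypothesis to recover $\mu|\bigcup Q_{r_i}(x_i)|$. The only cosmetic difference is that you spell out the covering inclusion and the passage from $\sum |B_{9nr_i}|$ to the measure of the union, which the paper leaves implicit.
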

\begin{proof}
 Take a Vitali subcover $\{B^l\}$ of the collection of balls $\{B_{3nr_i}(x_i)\}_{i=1}^N$; that is, a disjoint subcollection such that
 the union of the three-times dilations $\hat{B}^l$ covers $\cup_{i=1}^N B_{3nr_i}(x_i)$. Then
 \begin{align*}
  |\{u < M\}| &\geq \sum_l |\{u < M\} \cap B^l| \\
  &\geq 3^n\mu \sum_l |B^l| \quad (\text{by Inequality } \ref{MeasureInequality}) \\
  &\geq \mu \sum_l |\hat{B}^l| \\
  &\geq \mu \sum_{i=1}^N |Q_{r_i}(x_i)|. 
 \end{align*}
\end{proof}

Finally, we apply Lemma \ref{LocalABP} at all scales to prove Proposition \ref{MeasureEst}.

\begin{proof}[{\bf Proof of Proposition \ref{MeasureEst}}]

In the following, if a cube $Q_{r/2}(x)$ is a dyadic subcube obtained by dividing $Q_r(y)$, we say that $Q_r(y)$ is the predecessor of $Q_{r/2}(x)$.

\vspace{3mm}

 {\bf First Step:}
 Since $A_1(0) \cap Q_1 \neq \emptyset$ we are in the setting of Lemma \ref{LocalABP}. By subtracting a constant assume that
 $$A_1(0) \cap Q_1 \subset \{u < 0\}.$$
 Divide $Q_1$ into dyadic subcubes
 $$\{Q_{1/2}(x_i^1)\}.$$
 If for some $i$ we have 
 $$A_2(x_i^1) \cap Q_{1/2}(x_i^1) = \emptyset,$$ 
 keep the subcube $Q_{1/2}(x_i^1)$. Call the collection of such subcubes $Q^1$. If $Q_{1/2}(x_j) \notin Q^1$, we have by Lemma \ref{LocalABP} that
 $$A_2(x_j^1) \cap Q_{1/2}(x_j^1) \subset \{u < n\}.$$

\vspace{3mm}

 {\bf Inductive Step:} 
 Assume we kept disjoint (possibly empty) collections of disjoint cubes $Q^l = \{Q_{2^{-l}}(x_i^l)\}$ for $l = 1,...,k$ with the following properties:

 \begin{enumerate}
  \item For all $i,l$ we have
  $$A_{2^{l}}(x_i^l) \cap Q_{2^{-l}}(x_i^l) = \emptyset.$$

  \item For arbitrary $i,l$ let $Q_{2^{1-l}}(y)$ be the predecessor for $Q_{2^{-l}}(x_i^l)$. Then
  $$A_{2^{l-1}}(y) \cap Q_{2^{1-l}}(y) \neq \emptyset$$
  and
  $$A_{2^{l-1}}(y) \cap Q_{2^{1-l}}(y) \subset \left\{u < 2n\left(\sum_{m = 0}^{l-1} 2^{-m} - 1\right)\right\}.$$ 

 \item $Q_1 - \cup_{l=1}^{k} Q^l$ is partitioned into cubes 
 $$\{Q_{2^{-k}}(z_i^{k})\}$$
 such that $A_{2^{k}}(z_i^{k}) \cap Q_{2^{-k}}(z_i^{k}) \neq \emptyset$ and
 $$A_{2^{k}}(z_i^{k}) \cap Q_{2^{-k}}(z_i^{k}) \subset \left\{u < 2n\left(\sum_{m=1}^{k} 2^{-m}\right)\right\}.$$
 \end{enumerate}

 We will add a collection $Q^{k+1}$. Divide the cubes $Q_{2^{-k}}(z_i^k)$ into dyadic subcubes $\{Q_{2^{-k-1}}(x_i^{k+1})\}$. If for some $i$ we have 
 $$A_{2^{k+1}}(x_i^{k+1}) \cap Q_{2^{-k-1}}(x_i^{k+1}) = \emptyset,$$
 keep $Q_{2^{-k-1}}(x_i^{k+1})$. Call of the collection of cubes that we keep $Q^{k+1}$. It is obvious by construction that $Q^{k+1}$ is disjoint from
 $\cup_{l=1}^k Q^l$ and that the cubes in $Q^{k+1}$ satisfy
 properties $(1)$ and $(2)$. By Lemma \ref{LocalABP} and property $(3)$ above, in the remaining subcubes $Q_{2^{-k-1}}(x_j^{k+1})$ we have
 $$A_{2^{k+1}}(x_j^{k+1}) \cap Q_{2^{-k-1}}(x_j^{k+1}) \subset \left\{u < 2n\left(\sum_{m=1}^{k+1} 2^{-m}\right)\right\},$$
 completing the inductive step.

\vspace{3mm}

 {\bf Conclusion:}
 Assume first that
 $$|\cup_{l=1}^K Q^l| > \frac{1}{2}$$
 for some $K$. By properties $(1)$ and $(2)$ of the collections $Q^l$ and Lemma \ref{LocalABP}, the cubes $\{Q_{2^{-l}}(x_i^l)\}$ in $\cup_{l=1}^K Q^l$ satisfy the hypotheses of 
 Lemma \ref{CoveringLem} with $M = 4n$, provided we take $\epsilon_0 < \frac{1}{4}$ and $\eta_0$ small depending on $n,\lambda,\Lambda$. (Here $\epsilon_0,\eta_0$ are
 from the statement of Proposition \ref{MeasureEst}). Applying Lemma \ref{CoveringLem} and taking $\delta$ from the statement of Proposition \ref{MeasureEst}
 small, we are done.

 If this never happens, then $u < 2n$ at a point in each of a collection of cubes of arbitrarily small side length whose total measure exceeds $\frac{1}{2}$, and
 we are finished by the continuity of $u$.
\end{proof}

By an easy scaling argument we can state the main result of this section in balls, which will be convenient for later applications:

\begin{prop}\label{BallMeasureEst}
 Assume that $L_{\gamma}u \leq 0$ in $B_2$, $u \geq 0$ and $u \leq 1$ at some point in $B_{1/2}$.
 Then there exist $\epsilon_0$ small absolute, $M$ large depending on $n$ and 
 $\delta, \eta_0$ small depending on $n, \lambda, \Lambda$ such that if $\gamma < \epsilon_0$ and $S < \eta_0$ then
 $$\frac{|\{u \leq M\} \cap B_1|}{|B_1|} \geq \delta.$$ 
\end{prop}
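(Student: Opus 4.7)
The plan is to reduce directly to Proposition \ref{MeasureEst} by a rescaling centered at a point $x_* \in B_{1/2}$ with $u(x_*) \leq 1$. The point is to arrange, after rescaling, a configuration in which sliding a paraboloid of opening $1$ with vertex at the origin produces a contact point inside $Q_1$, so that Proposition \ref{MeasureEst} is applicable.

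Concretely, I would set $\tilde u(y) = \frac{1}{K} u(x_* + ry)$ with $r = 1/(8n)$ and $K = 8$. Since $|x_*| \leq 1/2$ and $4nr = 1/2$, the ball $B_{4nr}(x_*)$ lies inside $B_1$, so $\tilde u$ is defined on $B_{4n}$, is nonnegative, and satisfies $\tilde u(0) \leq 1/8$. By Remark \ref{Rescaling} (with translation), $\tilde u$ solves $\tilde L_{\tilde\gamma}\tilde u \leq 0$ on $B_{4n}$ with $\tilde\gamma = r\gamma/K = \gamma/(64n) < \epsilon_0$, and the drift norm $\tilde S = \|b\|_{L^n(B_{1/2}(x_*))} \leq S < \eta_0$.

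Next I would slide the paraboloid $P_c(y) = c - |y|^2/2$ from below by increasing $c$, setting $c_* = \sup\{c : P_c \leq \tilde u \text{ on } B_{4n}\}$. Evaluating at $0$ gives $c_* \leq \tilde u(0) \leq 1/8$. At a contact point $z_* \in A_1(0)$, we have $\tilde u(z_*) = c_* - |z_*|^2/2 \geq 0$, so $|z_*| \leq \sqrt{2c_*} \leq 1/2$, which places $z_*$ in $\overline{B_{1/2}} \subset Q_1$ and keeps it in the interior of $B_{4n}$. Thus $A_1(0) \cap Q_1 \neq \emptyset$ and Proposition \ref{MeasureEst} applies, yielding
\[
\frac{|\{\tilde u \leq \tilde u(z_*) + M\} \cap B_{4n}|}{|B_{4n}|} \geq \delta
\]
for the $M, \delta$ from that proposition. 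Since $\tilde u(z_*) \leq c_* \leq 1$, the set above is contained in $\{\tilde u \leq 1 + M\}$.

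Finally, I would unwind the rescaling. The set $\{\tilde u \leq 1+M\} \cap B_{4n}$ corresponds, under $z = x_* + ry$ with Jacobian $r^n$, to a subset of $\{u \leq 8(1+M)\}$ lying in $B_{4nr}(x_*) = B_{1/2}(x_*) \subset B_1$. Multiplying measures by $r^n$ and using $(4nr)^n = 2^{-n}$ gives
\[
\frac{|\{u \leq 8(1+M)\} \cap B_1|}{|B_1|} \geq \frac{\delta}{2^n},
\]
so the conclusion holds with $M' = 8(1+M)$ and $\delta' = \delta/2^n$, retaining the same $\epsilon_0$ and $\eta_0$. There is no real obstacle here; the only thing to be careful about is calibrating $r$ and $K$ so that simultaneously $B_{4nr}(x_*) \subset B_1$, the touching paraboloid lies in $Q_1$, and the rescaled parameters $\tilde\gamma, \tilde S$ still satisfy the hypotheses of Proposition \ref{MeasureEst}.
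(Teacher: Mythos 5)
Your proof is correct and is precisely the ``easy scaling argument'' the paper alludes to (the paper states Proposition~\ref{BallMeasureEst} as an immediate consequence of Proposition~\ref{MeasureEst} without giving details). The choice $r = 1/(8n)$, $K = 8$ ensures simultaneously that $B_{4nr}(x_*) \subset B_2$, that the contact point of the opening-$1$ paraboloid lands in $\overline{B_{1/2}} \subset Q_1$ (using $u \geq 0$), and that $\tilde\gamma = r\gamma/K < \gamma < \epsilon_0$ and $\tilde S \leq S < \eta_0$, so the reduction is complete; the bookkeeping when unwinding the rescaling is also correct.
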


\begin{rem}
 We will remove the restriction that $S$ is small by using a refinement of the ABP maximum principle and techniques due to Safonov (\cite{Saf}) in a later section. 
 (See the proof of Proposition \ref{LocalizationProp}.)
\end{rem}

\section{Doubling Lemma}
In this section we prove a doubling lemma which is standard in the 
Krylov-Safonov theory but requires a new proof when an equation only holds where the gradient is large and the drift is unbounded.
We treat the case when $S$ is small. Again, we will remove this hypothesis in a later section.

\begin{lem}\label{DoublingLem}
  There exist $M, \epsilon,\eta_0$ depending on $n,\lambda,\Lambda$
  such that if $\gamma < \epsilon$, $S < \eta_0$ and  $L_{\gamma}u \leq 0$ in $B_3$,
  $u \geq 0$, and $u \leq 1$ at some point in
  $\overline{B_1}$, then $u \leq M$ at some point in $B_{1/2}$.
\end{lem}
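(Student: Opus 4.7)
The plan is to slide a standard paraboloid $P(x) = c - \frac{a}{2}|x|^2$ (with $a$ a suitable universal opening, centered at $0$) upward from below $u$ on $B_3$, increasing $c$ until the first touching point $x_0$. The constraint $P(y_0) \leq u(y_0) \leq 1$ at some $y_0 \in \overline{B_1}$ with $u(y_0) \leq 1$ yields $c \leq 1 + \frac{a}{2}$, so that at the contact point $u(x_0) = c - \frac{a}{2}|x_0|^2 \leq M_0 := 1 + \frac{a}{2}$. Basic considerations using $u \geq 0$ and the profile of $P$ keep $x_0$ a definite distance from $\partial B_3$, so in particular the contact point is interior.

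If $x_0$ happens to lie in $B_{1/2}$, the lemma is immediate with $M = M_0$. Otherwise $|x_0| \geq \tfrac{1}{2}$ and $|\nabla u(x_0)| = a|x_0| \geq \tfrac{a}{2}$, which exceeds $\gamma$ provided $\epsilon$ is taken small enough (in terms of $a$, hence of $n,\lambda,\Lambda$). Thus the equation $Lu \leq 0$ holds at $x_0$, and a paraboloid of opening $a$ touches $u$ from below at $x_0$. After a rescaling based at $x_0$ (as in Remark \ref{Rescaling}), we are precisely in the hypothesis of Proposition \ref{MeasureEst}, which then yields a set of positive measure with $|\{u \leq M_0 + M'\} \cap B_\rho(x_0)| \geq \delta|B_\rho|$ for a controlled radius $\rho$ and universal $M', \delta$. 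The smallness of $S$ is needed here to control the $R^n S$ error term of the underlying ABP estimate of Lemma \ref{ParaboloidABP}.

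The step I expect to be the main obstacle is converting this measure information around $x_0$ into an actual pointwise bound at a point of $B_{1/2}$. The cleanest route is to arrange $a$ so that the ball $B_\rho(x_0)$ is forced to overlap $B_{1/2}$ in a nontrivial portion; the positive-measure lower bound on $\{u \leq M_0 + M'\}$ inside $B_\rho(x_0)$ then necessarily contributes a point of $B_{1/2}$ to the sublevel set. If such a geometric overlap cannot be produced in a single step, one repeats the sliding-then-measure-estimate procedure a universally bounded number of times, each iteration extending the region on which $u$ is known to be controlled until $B_{1/2}$ is captured. Balancing the parameters $a, \epsilon, \eta_0$ so that a single universal constant $M$ depending only on $n, \lambda, \Lambda$ suffices at the end is where the main technical care is required.
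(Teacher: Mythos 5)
Your first step (sliding a single paraboloid from below to obtain a contact point $x_0$ at which $u(x_0)\leq M_0$) is fine, but the strategy that follows has a genuine gap that the paper avoids by taking a completely different route.

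The crux is that Proposition \ref{MeasureEst} produces only a \emph{measure} bound: a set of measure at least $\delta|B_\rho|$ inside a ball $B_\rho(x_0)$ on which $u\leq M$. It gives no control over \emph{where} in $B_\rho(x_0)$ this set sits. Even if you arrange $a$ so that $B_\rho(x_0)$ overlaps $B_{1/2}$, the sublevel set (which has measure only $\delta|B_\rho|$, $\delta$ small) could lie entirely in the part of $B_\rho(x_0)$ outside $B_{1/2}$; you would need $|B_\rho(x_0)\setminus B_{1/2}| < \delta|B_\rho|$, i.e.\ $B_\rho(x_0)$ essentially contained in $B_{1/2}$, which you cannot arrange since $x_0$ ranges over much of $B_3$. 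The iteration idea does not repair this: each application of the measure estimate again gives you a point where $u$ is bounded somewhere in the new ball but with no way to steer that point toward $B_{1/2}$, so there is no guaranteed progress. Moreover Lemma \ref{DoublingLem} is used later (in Proposition \ref{LocalizationProp}) precisely as a pointwise chaining tool, so a measure-level conclusion is not a substitute.

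The paper's proof is a barrier argument rather than a consequence of the measure estimate. One argues by contradiction: suppose $u>M$ on all of $B_{1/2}$ while $u\leq 1$ somewhere in $\overline{B_1}$. Translates $\varphi(\cdot-y)$, $y\in B_{1/4}$, of a fixed barrier built from $|x|^{-\alpha}$ (with $\alpha$ large so $a^{ij}\varphi_{ij}>1$ away from its cap, capped by a constant in $B_{1/4}$) are slid from below; the hypothesis $u>M$ in $B_{1/2}$ forces the touching point out of the cap, and $\varphi>2$ in $B_{3/2}$ together with $u\leq1$ at $x_0\in\overline{B_1}$ keeps the touching point interior to $B_3$. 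At each contact point $|\nabla u|$ is pinned between two universal constants (so the equation applies for $\gamma$ small) and $a^{ij}u_{ij}\geq 1$, which forces $|b|\geq c$ there. An ABP-type area-formula computation on the map $x\mapsto y$ then yields $|B_{1/4}|\leq CS$, contradicting $S<\eta_0$. You should adopt this barrier-sliding mechanism; without a subsolution (or slid-barrier surrogate) you will not be able to extract a pointwise bound from the measure estimate alone.
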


The usual method to prove an estimate like this is to take a barrier of the form $|x|^{-\alpha}$ for $\alpha$ large, cut if off in $B_{1/2}$, and lift
from below. For $|b|$ bounded, we can choose $\alpha$ so that this function is a subsolution and cannot touch except in the cut-off region. However,
in our situation $|b|$ may have large spikes and this function may not be a subsolution. 

To get around this, and to use the equation only at points where the gradient is large, we slide
a barrier of this form from below at many points and use the equation at the contact points.

\begin{proof}
 Assume by way of contradiction that $u(x_0) = 1$ for some point in $\overline{B_1}$ and that $u > M$ everywhere in $B_{1/2}$ for some large $M$ to be determined.
 We first slide barriers from below. Let
 $$\varphi_0(x) = \begin{cases}
                 \frac{|x|^{-\alpha} - 2^{-\alpha}}{4^{\alpha}-2^{-\alpha}} \text{ in } \mathbb{R}^n \backslash B_{1/4}, \\
                 1 \text{ in } B_{1/4}.
                \end{cases}.$$
 In polar coordinates, the Hessian of $|x|^{-\alpha}$ for $\alpha > 0$ is
 $$\alpha|x|^{-\alpha-2}\text{diag}(\alpha + 1,-1,-1,...,-1),$$
 so for $\alpha(n,\lambda,\Lambda)$ large we have $a^{ij}(x)(|x|^{-\alpha})_{ij} > c(n,\lambda,\Lambda) > 0$ in $B_4-\{0\}$.
 Take $C_1$ large enough that 
 $$\varphi(x) := C_1\varphi_0(x) > 2 \text{ in } B_{3/2}$$
 and 
 $$a^{ij}(x)\varphi_{ij} > 1 \text{ in } B_4-B_{1/4}.$$
 Then for $M$ large, if we lift the functions $\varphi(x-y)$ from below $u$ for $y \in B_{1/4}$ they must touch $u$ by below in $B_3$ and outside of
 $\overline{B_{1/4}(y)}$. Furthermore, $c < |\nabla \varphi| < C$ at the contact points by construction. Assuming that $\gamma < c$ the equation
 holds at these points.

 Assume first that $u$ is $C^2$. At a contact point $x$ for a barrier centered at $y$, we thus have
 $$1 \leq a^{ij}(x)u_{ij}(x) \leq |b(x)||\nabla u(x)|$$
 and since $|\nabla u(x)| = |\nabla \varphi(x-y)|$ is bounded above and below by positive universal constants,
 we have that 
 $$|b(x)| > c.$$
 At the contact point $x$ we have the following information:
 \begin{equation}\label{MappingExpression}
  \nabla u(x) = \nabla\varphi(x-y),
 \end{equation}
 \begin{equation}\label{DoublingHessianInequality}
  D^2u(x) \geq D^2\varphi(x-y).
 \end{equation}
 Differentiating expression \ref{MappingExpression} we get
 $$D^2u(x) = D^2\varphi(x-y)(I - D_xy),$$
 which upon rearrangement gives
 $$D_xy = I - D^2u(x)(D^2\varphi)^{-1}(x-y).$$
 Finally, the equation combined with Inequality \ref{DoublingHessianInequality} gives
 $$|D^2u(x)| \leq C(|D^2\varphi(x-y)| + |b(x)||\nabla u(x)|).$$
 Applying this inequality to the expression for $D_xy$ and using that $c < |b(x)|$ we obtain that
 $$|\det D_xy| \leq C|b(x)|^n$$
 on the contact set. Denote the contact set by $E$. Since we lifted the barriers from below for $y \in B_{1/4}$, we obtain
 $$|B_{1/4}| \leq \int_{E} |\det D_xy| \,dx \leq CS,$$
 which would be a contradiction provided that $S < \eta_0$ for some small $\eta_0$ depending on $n,\lambda,\Lambda$.

 The case $u \in W^{2,n}(B_1)$ follows from a standard approximation argument. See for example \cite{GT}, Section $9.1$.
\end{proof}

\section{Refinement of ABP Maximum Principle}

The classical ABP estimate says the following (see \cite{GT}):
\begin{thm}
 Assume that $u$ solves
 $$Lu \leq f$$
 in $B_1$ and $u \geq 0$ on $\partial B_1$. Then
 $$\sup_{B_1} u^- \leq C(n,\lambda,\Lambda,S)\|f\|_{L^n}.$$
\end{thm}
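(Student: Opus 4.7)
The plan is to run the classical Alexandrov--Bakelman--Pucci argument via the convex envelope, keeping careful track of the drift contribution. Set $M = \sup_{B_1} u^-$; I would extend $u$ by $0$ outside $B_1$ and let $\Gamma$ denote its convex envelope on $B_1$. A standard geometric observation is that the gradient image of the contact set
$$E = \{u = \Gamma\} \cap \{u < 0\} \cap B_1$$
covers a ball of radius comparable to $M$, which gives the lower bound
$$c M^n \leq |\nabla \Gamma(E)|.$$

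At points of $E$ the function $u$ is touched from below by an affine tangent of $\Gamma$, so $D^2 u \geq 0$ and $\nabla u = \nabla \Gamma$. Combined with uniform ellipticity, the differential inequality $a^{ij} u_{ij} \leq f - b \cdot \nabla u$, and the arithmetic-geometric mean inequality, I would derive the pointwise bound
$$(\det D^2 u)^{1/n} \leq \frac{1}{n\lambda}\bigl(|f| + |b||\nabla u|\bigr)$$
on $E$. After a $W^{2,n}$ approximation step in the spirit of Lemma \ref{ParaboloidABP} (see also \cite{GT}, Section 9.1), the area formula together with the crude bound $|\nabla \Gamma| \leq CM$ on $E$ yields
$$c M^n \leq \int_E \det D^2 u \, dx \leq C \|f\|_{L^n(B_1)}^n + C S^n M^n.$$

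The main obstacle is the drift term $C S^n M^n$: when $S$ is small it can be absorbed into the left-hand side directly, but for general $S$ the estimate does not close in a single pass. I would handle this by a Safonov-type localization (as in \cite{Saf}): cover $B_1$ by subdomains on which the local $L^n$ norm of $b$ is small enough to absorb, run the above step on each, and patch the resulting estimates together, producing a constant depending on $S$. An alternative is to avoid the pointwise bound $|\nabla \Gamma| \leq CM$ altogether by changing variables via $p = \nabla \Gamma(x)$ inside the integral involving $|b|^n|\nabla \Gamma|^n$, trading a factor of $|\nabla \Gamma|$ for a Jacobian and iterating to obtain a quasilinear relation for $M$ with an $S$-dependent coefficient.
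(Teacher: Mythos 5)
The paper does not actually prove this statement; it is recorded as the classical ABP maximum principle and the reader is referred to \cite{GT}. So there is no ``paper's proof'' to compare against, and what matters is whether your sketch of the classical argument is correct.

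Your setup through the inequality $c M^n \leq C\|f\|_{L^n}^n + CS^n M^n$ is accurate, and you correctly identify that for general $S$ this does not close. However, neither of your two proposed fixes is the standard (or the cleanest) way to handle the drift, and as written the second one has a gap. The classical device (\cite{GT}, Section 9.1) is a \emph{weighted} ABP inequality: instead of $c M^n \leq \int_E \det D^2 u\,dx$, one uses
$$\int_{B_{cM}} g(p)\,dp \;\leq\; \int_E g(\nabla\Gamma(x))\,\det D^2 u(x)\,dx$$
for a positive weight $g$, and then chooses $g(p) = (|p|^n + \mu^n)^{-1}$. With this choice, the elementary inequality $(|b||\nabla u| + |f|)^n \leq 2^{n-1}(|b|^n + \mu^{-n}|f|^n)(|\nabla u|^n + \mu^n)$ makes the factor $(|\nabla u|^n + \mu^n)$ cancel against $g(\nabla\Gamma)$, so the right-hand side is bounded by $C(S^n + \mu^{-n}\|f\|_{L^n}^n)$ with no leftover powers of $M$. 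The left-hand side is then $\omega_n \log(1 + c^nM^n/\mu^n)$, and taking $\mu = \|f\|_{L^n}$ yields $M \leq C(n,\lambda,\Lambda,S)\|f\|_{L^n}$ with $C \sim e^{CS^n}$. Your second suggestion (``trading a factor of $|\nabla\Gamma|$ for a Jacobian and iterating'') is pointing in this direction, but without specifying the weight $g$ and the cancellation mechanism it does not close: a naive change of variables $p = \nabla\Gamma(x)$ still leaves you with an $M$-dependent integrand. Your first suggestion (a Safonov-type covering by subdomains where $\|b\|_{L^n}$ is small) is much heavier machinery than this classical theorem requires and would need separate care with boundary values on the subdomains; it is the right idea for the paper's Harnack-type localization, but not the intended route to the plain ABP estimate.
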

Here $u^-(x) = -\min\{u(x),0\}.$ In particular, if $f \equiv 0$ then $u$ cannot have interior minima. 
This is not true if the equation doesn't hold for small gradient, which we observed in the introduction.
However, we can say something quantitative about how much $u$ can dip below $0$. 
Here we give a refinement of this estimate for functions that only satisfy an equation where the gradient is large.

\begin{prop}\label{RefinedABP}
 There exists $\epsilon$ small depending on $n,\lambda,\Lambda$ and $S$ such that if $\gamma < \epsilon$, $L_{\gamma}u \leq 0$
 in $B_1$, and $u \geq 0$ on $\partial B_1$, then
 $$\sup_{B_1}u^- \leq 1.$$ 
\end{prop}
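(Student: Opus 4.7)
The plan is to adapt the classical ABP argument on the convex envelope, replacing the crude estimate $|\nabla u| \leq Cm$ on the contact set with a weighted area formula that separates $\|b\|_{L^n}$ from $m$. Suppose for contradiction $m := \sup_{B_1} u^- > 1$, and let $\Gamma$ be the convex envelope of $\min(u,0)$ on $\overline{B_1}$, with contact set $E = \{\Gamma = u \leq 0\}$. The standard sliding-planes lemma gives $\nabla \Gamma(E) \supseteq B_{m/2}$; its verification in $B_1$ uses only the elementary inequality $|\xi| - \xi \cdot x_0 \leq 2|\xi|$ for $x_0 \in B_1$, which shows that the interior constraint dominates the boundary one whenever $|\xi| < m/2$. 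Splitting $E$ according to whether $|\nabla u| \leq \gamma$ or $|\nabla u| > \gamma$, the large-gradient part $E_l$ satisfies $\nabla u(E_l) \supseteq B_{m/2} \setminus B_\gamma$; on $E_l$ the full equation holds and, combined with $D^2 u \geq 0$ at contact points, gives $\det D^2 u \leq (|b||\nabla u|/(n\lambda))^n$ by AM-GM.

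The heart of the argument is then the area formula with the singular weight $|\xi|^{-n}$:
\begin{equation*}
\int_{B_{m/2}\setminus B_\gamma} \frac{d\xi}{|\xi|^n}
\;\leq\; \int_{\nabla u(E_l)} \frac{d\xi}{|\xi|^n}
\;\leq\; \int_{E_l} \frac{\det D^2 u(x)}{|\nabla u(x)|^n}\, dx
\;\leq\; \frac{1}{(n\lambda)^n}\int_{E_l} |b|^n\, dx
\;\leq\; \frac{S^n}{(n\lambda)^n}.
\end{equation*}
The weight is engineered so that the $|\nabla u|^n$ factor coming from the drift is exactly absorbed, leaving only $\int |b|^n$ on the right hand side and freeing the estimate from dependence on $m$. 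Evaluating the left side in polar coordinates yields $\omega_{n-1}\log(m/(2\gamma))$, so $m \leq 2\gamma \exp(C(n,\lambda)S^n)$. Taking $\epsilon = \tfrac{1}{2}\exp(-C(n,\lambda)S^n)$ then forces $m \leq 1$ whenever $\gamma < \epsilon$, contradicting $m > 1$.

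The main technical obstacle is the justification of the (inequality form of the) area formula for $u \in W^{2,n}$ at points of contact with the convex envelope: one needs $\nabla u = \nabla \Gamma$ and $D^2 u \geq D^2_A \Gamma \geq 0$ a.e. on $E$, and the Jacobian bound must pass to the limit under smooth approximation. The same approximation scheme used in the proof of Lemma \ref{ParaboloidABP} above (cf.\ \cite{GT} Section 9.1) handles this step, and everything else is a pointwise computation at good contact points.
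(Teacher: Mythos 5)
Your proof is correct, and its skeleton is the same as the paper's: slide planes from below (you phrase this via the convex envelope of $\min(u,0)$; the paper slides planes directly), apply the equation at contact points of large gradient to get $\det D^2u \leq C|b|^n|\nabla u|^n$, and feed this into the area formula for the gradient map, exploiting the coincidence that $|\nabla u|^n$ on the contact set and the volume element in slope space scale identically. Where you depart from the paper is in how you execute the multi-scale step that records this cancellation: the paper decomposes slope space into dyadic annuli $R_k = \{2^{-k-1} < |p| \leq 2^{-k}\}$, shows $\int_{\Omega_k}|b|^n > c$ on each corresponding contact set, and then sums over $k \leq k_0$ with $\epsilon = 2^{-k_0}$; you collapse this into a single weighted area formula with the singular weight $|\xi|^{-n}$, whose integral over $B_{m/2}\setminus B_\gamma$ is exactly the logarithm $\omega_{n-1}\log(m/(2\gamma))$ that the dyadic count approximates. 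The weighted form is a touch cleaner, skips the per-annulus bookkeeping, and produces the explicit dependence $\epsilon \sim e^{-C(n,\lambda)S^n}$ in one line; the paper records this only as a remark and writes the exponent as $e^{-CS}$, but $S^n$ (as you have it) is what the dyadic argument actually gives, since the contradiction there requires $k_0 c > \int_{B_1}|b|^n = S^n$. The one technical point that must not be glossed over is the passage from $C^2$ to $W^{2,n}$ in the weighted area formula on the convex-envelope contact set; you flag this and correctly point to the approximation scheme of \cite{GT}, Section 9.1, which is the same device the paper invokes, so the argument is complete.
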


In the proof we look at the sets where the graph of $u$ has supporting planes with slopes in dyadic annuli. Using the equation at these points
and the observation that $|\nabla u|^n$ and the volumes of the annuli scale the same way, we conclude that $|b|^n$ is large in measure.

\begin{proof}
 Assume not. Then $u$ takes its minimum at some point $x_0 \in B_1$ with $u(x_0) < -1$. Then if we slide any plane with slope in $B_{1/2}$ from
 below $u$, it will touch $u$ by below in $B_1$. If $u$ is $C^2$, then we know $D^2u \geq 0$ at a contact point. From this one-sided bound on $D^2u$
 and the equation (provided it holds for this slope) we obtain that at a contact point,
\begin{equation}\label{PlaneHessianBound}
 \det D^2u \leq C|b|^n|\nabla u|^n.
\end{equation}
 Lift all the planes with slope $p$ in the annulus $R_k = \{2^{-k-1} < |p| \leq 2^{-k}\}$ from below until they touch $u$ by below on some contact set 
 $\Omega_k$. Provided $\epsilon < 2^{-k-1}$, we may apply the area formula and Inequality (\ref{PlaneHessianBound}) to obtain
 $$|R_k| = \int_{\Omega_k} \det D^2u \,dx \leq C\int_{\Omega_k} |b|^n|\nabla u|^n \,dx.$$
 Since $|R_k|$ is $2^{-kn}$ up to a constant depending only on $n$, and furthermore $|\nabla u|^n$ is $2^{-kn}$ up to a constant in $\Omega_k$, we conclude that
 $$\int_{\Omega_k} |b|^n \,dx > c.$$
 Taking $\epsilon = 2^{-k_0}$ we get 
 $$\int_{B_1} |b|^n \,dx > k_0c$$
 which is a contradiction for $k_0$ large.

 Again, the case $u \in W^{2,n}(B_1)$ follows from a standard approximation argument. See for example \cite{GT}, Section $9.1$.
\end{proof}

\begin{rem}
 The proof shows that $\epsilon$ depends upon $S$ like $e^{-CS}$.
\end{rem}

\section{Harnack Inequality}

By combining the measure estimate Proposition \ref{BallMeasureEst} and the doubling Lemma \ref{DoublingLem}, we see that if
$u$ is small at some point, we can localize to a region where $u$ is bounded in a set of positive measure. However,
these estimates used that $S$ is small. We remove this hypothesis in the following proposition with the help of Lemma \ref{RefinedABP}. 

\begin{prop}[{\bf Measure Localization Property}]\label{LocalizationProp}
 There exist small $\epsilon,\delta$ and large $M$ depending on $n,\lambda,\Lambda,S$ 
 such that if $\gamma \leq \epsilon$, $L_{\gamma}u \leq 0$ in $B_2$, $u \geq 0$ in $B_2$ and
 $u \leq 1$ at some point in $\overline{B_1}$, then
 $$\frac{|\{u \leq M\} \cap B_{1/2}|}{|B_{1/2}|} > \delta.$$
\end{prop}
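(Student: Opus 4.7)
The plan is to remove the smallness hypothesis on $S$ by a Safonov-style localization that exploits the additivity of $\int|b|^n$ over disjoint sets, bridging the few regions where the local drift is concentrated via the refined ABP (Proposition \ref{RefinedABP}). Proposition \ref{RefinedABP} is the crucial new ingredient because it is valid for any $S$ (at the cost of smaller $\gamma$), whereas Lemma \ref{DoublingLem} and Proposition \ref{BallMeasureEst} require $S<\eta_0(n,\lambda,\Lambda)$.

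First I would fix $\eta_0$ as the common small-drift threshold of Lemma \ref{DoublingLem} and Proposition \ref{BallMeasureEst}. In any disjoint family of balls $\{B_r(y_i)\}\subset B_2$ at most $(S/\eta_0)^n$ can satisfy $\|b\|_{L^n(B_r(y_i))}\ge\eta_0$. Taking a Vitali cover of $B_{1/2}$ by balls of radius $r$ and choosing $r$ small depending on $n,S,\delta$, the ``bad'' balls occupy less than $|B_{1/2}|/2$, so the ``good'' balls cover the rest. On each good ball the rescaling of Remark \ref{Rescaling} produces a function $\tilde u$ satisfying the small-drift hypotheses of the earlier results on $B_1$.

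Second I would propagate the hypothesis $u(x_0)\le 1$ to points in a substantial fraction of good subballs by chaining Lemma \ref{DoublingLem} across overlapping good balls: each doubling step produces a point in the next good ball where $u$ is bounded by a controlled multiple of the previous bound. Whenever the chain must traverse a bad ball, I would apply Proposition \ref{RefinedABP} on a slightly larger enclosing ball to certify that $u$ cannot dip below its boundary values by more than a controlled amount; combined with Lemma \ref{DoublingLem} on surrounding good balls, this yields a bounded multiplicative loss per skip. Since the number of bad balls is at most $(S/\eta_0)^n$, the resulting pointwise bound $M_0$ at the target depends only on $n,\lambda,\Lambda,S$.

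Finally, applying the rescaled Proposition \ref{BallMeasureEst} on each good subball containing a point with $u\le M_0$ yields $|\{u\le M_0M_1\}\cap B_{r/2}(y_i)|\ge \delta_0|B_{r/2}(y_i)|$ for the thresholds $M_1,\delta_0$ of that proposition. Summing over the good subballs, whose union covers more than half of $B_{1/2}$, gives $|\{u\le M\}\cap B_{1/2}|\ge \delta|B_{1/2}|$ with $M=M_0M_1$. The main obstacle will be making the chain-with-skips argument rigorous, specifically calibrating the scale at which Proposition \ref{RefinedABP} is applied and quantifying the multiplicative loss per skip; the scale invariance of our estimates under the rescaling in Remark \ref{Rescaling} is what makes this calibration possible.
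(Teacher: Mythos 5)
Your high-level strategy is the right one --- pigeonhole on disjoint regions to find places where $\|b\|_{L^n}$ is below the threshold $\eta_0$, use Proposition~\ref{RefinedABP} (which is $S$-robust) to obtain a small value of $u$, chain Lemma~\ref{DoublingLem} through the good region, and finish with Proposition~\ref{BallMeasureEst}. But the step you flag as ``the main obstacle'' --- the chain-with-skips --- is precisely where the argument breaks, and the paper handles it with a geometric device your proposal is missing: a \emph{good annulus}. The paper pigeonholes over three families --- $N_1$ disjoint balls in $B_{1/2}$, $N_2$ disjoint radial tubes from the chosen ball $B_0$ to $\partial B_2$, and $N_3$ disjoint annuli in $B_2\setminus B_1$ --- getting one good ball $B_0$, one good tube $T_0$, and one good annulus $A_0$. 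Proposition~\ref{RefinedABP} is then applied once, on the ball enclosed by $A_0$: since $u\le 1$ somewhere in $\overline{B_1}$, the infimum of $u$ over the \emph{inner boundary sphere of $A_0$} is at most $2$. Because that sphere lies entirely inside $\overline{A_0}$, it does not matter \emph{where} on the sphere the small value appears --- any such point is a valid starting point in a region of small drift, and one can then chain Lemma~\ref{DoublingLem} through $T_0$ into $B_0$ (every subball of a good tube/annulus automatically has small $L^n$-drift).

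Your local skip argument does not have this property. Applying Proposition~\ref{RefinedABP} on a ball enclosing a single bad ball tells you $\inf_{\partial B}u \le K + c$, but the infimizing point can land on the near side of the bad ball --- back where your chain came from --- so no progress is guaranteed, and turning this into a routing argument around the bad balls would require an explicit connectivity analysis you have not given. There are also two lesser issues: your cover is only of $B_{1/2}$, yet the chain must start at $x_0\in\overline{B_1}$ and traverse $B_1\setminus B_{1/2}$, which your cover does not reach; and the goal of propagating the bound to most of $B_{1/2}$ is unnecessarily strong. The paper only needs \emph{one} good ball $B_0\subset B_{1/2}$: applying Proposition~\ref{BallMeasureEst} there already gives a positive fraction of $B_{1/2}$, after shrinking $\delta$ by the fixed ratio $|B_0|/|B_{1/2}|$, which is controlled since $N_1$ depends only on $n,\lambda,\Lambda,S$. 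Summing over many good subballs --- with the attendant complications of ensuring the doubled ball around each stays in the good set --- is not needed.
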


The strategy of the proof is to find some annulus in $B_2-B_1$
where $|b|^n$ is small in average, and connect this annulus to $B_{1/2}$ with a tube in which $|b|^n$ is small in average. We can then apply Lemma
\ref{DoublingLem} in a universal number of small overlapping balls connecting a point in the annulus where $u$ is bounded (the existence of which is guaranteed by
Proposition \ref{RefinedABP})
to a small ball in $B_{1/2}$ where $|b|^n$ is small in average. The conclusion then follows from Proposition \ref{BallMeasureEst}.

\begin{proof}
 First, take a large number $N_1$ of disjoint balls of equal size in $B_{1/2}$. For $N_1$ large depending on $S$, we have that in at least one of these
 balls $B_{r_0}(x_0) = B_0$ that $\|b\|_{L^n(B_0)} < \eta_0$ where $\eta_0$ is the smaller of those from Lemma \ref{DoublingLem} and Proposition \ref{BallMeasureEst}.

 Now take a large number $N_2$ of disjoint tubes of equal radius exiting radially outwards from $B_{0}$ to $\partial B_2$. 
 Then in at least one of these tubes $T_0$, provided $N_2$ is large depending on $S$, 
 we know that $\|b\|_{L^n(T_0)} < \eta_0$. 

 Finally, divide $B_2 \backslash B_1$ into $N_3$ annuli of equal width
 and use the same reasoning to get an annulus $A_0$ such that $\|b\|_{L^n(A_0)} < \eta_0$.

 By Lemma \ref{RefinedABP} we may take $\epsilon$ so small that $u \leq 2$ at the center of some ball of small radius depending on $S$ contained in $A_0$. 
 By connecting a large number $K$ (depending on $S$) of overlapping balls from $A_0$ into $B_0$ through $T_0$ and applying
 Proposition \ref{DoublingLem} (rescaled, see Remark \ref{Rescaling}) in them
 we obtain that $u < 2M_0^{K}$ at a point in $B_{r_0/2}(x_0)$ for some $M_0(n,\lambda,\Lambda)$.

 Having taken $\epsilon$ sufficiently small we may apply Proposition \ref{BallMeasureEst} (rescaled) in $B_0$ to prove the claim.
\end{proof}

We next state the rescaled version of Proposition \ref{LocalizationProp}.
\begin{prop}\label{RescaledMeasureEst}
 There exist $\epsilon,\delta,M$ depending on $n,\lambda,\Lambda,S$ 
 such that for any $r \leq 1$ and $K \geq 1$, if $\gamma \leq \epsilon$, $L_{\gamma}u \leq 0$ in $B_{2r}$, $u \geq 0$ in $B_{2r}$ and
 $u \leq K$ at some point in $\overline{B_r}$, then
 $$\frac{|\{u \leq KM\} \cap B_{r/2}|}{|B_{r/2}|} > \delta.$$
\end{prop}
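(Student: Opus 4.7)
The plan is to reduce the rescaled statement to Proposition \ref{LocalizationProp} by the scaling recorded in Remark \ref{Rescaling}, since all of the work has already been done at scale one.

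First I would define the rescaled function
$$\tilde{u}(x) = \frac{1}{K}u(rx),$$
and verify that it satisfies the hypotheses of Proposition \ref{LocalizationProp} with the \emph{same} constants $\epsilon,\delta,M$. Since $u \geq 0$ in $B_{2r}$ we have $\tilde{u} \geq 0$ in $B_2$. If $x_* \in \overline{B_r}$ is a point where $u(x_*) \leq K$, then $\tilde{u}(x_*/r) \leq 1$ and $x_*/r \in \overline{B_1}$. By Remark \ref{Rescaling}, $\tilde{u}$ solves $\tilde{L}_{\tilde\gamma}\tilde{u} \leq 0$ in $B_2$ where $\tilde{L}$ satisfies the same structure conditions (with the same $S$) and $\tilde\gamma = r\gamma/K$. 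Since $r \leq 1$ and $K \geq 1$, we have $\tilde\gamma \leq \gamma \leq \epsilon$, so the threshold hypothesis on the gradient is preserved.

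Next I would apply Proposition \ref{LocalizationProp} to $\tilde{u}$ to obtain
$$\frac{|\{\tilde{u} \leq M\} \cap B_{1/2}|}{|B_{1/2}|} > \delta.$$
Finally I would unwind the scaling: by the change of variables $y = rx$,
$$\{\tilde{u} \leq M\} \cap B_{1/2} = \tfrac{1}{r}\bigl(\{u \leq KM\} \cap B_{r/2}\bigr),$$
and the Jacobian factor $r^n$ cancels on both numerator and denominator, yielding
$$\frac{|\{u \leq KM\} \cap B_{r/2}|}{|B_{r/2}|} > \delta,$$
as desired.

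There is essentially no obstacle here; the only thing to be careful about is that the parameters $\epsilon, \delta, M$ from Proposition \ref{LocalizationProp} depend on $S$, and one needs to use that $S$ is invariant under the parabolic-type rescaling (which is precisely the point of the $L^n$ critical scaling of the drift recorded in the introduction and in Remark \ref{Rescaling}), together with the inequality $r\gamma/K \leq \gamma$ coming from the assumptions $r \leq 1$ and $K \geq 1$.
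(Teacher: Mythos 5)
Your proposal is correct and is exactly the argument the paper gives: apply Proposition \ref{LocalizationProp} to $\tilde{u}(x) = \frac{1}{K}u(rx)$, using Remark \ref{Rescaling} to see that the structure constants and $S$ are preserved and $\tilde\gamma = r\gamma/K \leq \gamma$. You have simply written out the bookkeeping that the paper leaves implicit.
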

\begin{proof}
 Apply Proposition \ref{LocalizationProp} to the rescaled function
 $$\tilde{u}(x) = \frac{1}{K}u(rx).$$
 (See Remark \ref{Rescaling}.)
\end{proof}

We note that 
Proposition \ref{RescaledMeasureEst} is the same result as Corollary $4.5$ in \cite{IS}, with constants now depending on $S$ instead of $\|b\|_{L^{\infty}}$.
Theorem \ref{Main} and Theorem \ref{HolderReg} follow from Proposition \ref{RescaledMeasureEst} by standard scaling and covering techniques, and the proofs
are similar to those for the uniformly elliptic case ($\gamma = 0$). The proofs can be taken verbatim from \cite{IS}.

\begin{proof}[{\bf Proofs Theorem \ref{Main} and Theorem \ref{HolderReg}}]
See \cite{IS}, sections $5$,$6$ and $7$.
\end{proof} 



\end{document}